\documentclass[11pt]{article}
\usepackage{epic,latexsym}
\usepackage{amssymb,amsmath,amsthm}
\usepackage{color}
\usepackage{tikz}
\usepackage{enumitem}
\usepackage{authblk}

\textwidth=6.0in \textheight=8.5in \evensidemargin=0in
\oddsidemargin=0in \topmargin=0in \topskip=0pt \baselineskip=12pt
\parskip=8pt
\parindent=1em

\newtheorem{thm}{Theorem}
\newtheorem{lem}[thm]{Lemma}

\newtheorem{cor}[thm]{Corollary}

\newtheorem{prop}[thm]{Proposition}
\newtheorem{prm}[thm]{Problem}
\newtheorem{conj}{Conjecture}

\def \nmr {\begin{enumerate}}
\def \enmr {\end{enumerate}}

\def \tmz {\begin{itemize}}
\def \etmz {\end{itemize}}

\newcommand{\ml}{l\kern-0.55mm\char39\kern-0.3mm}

\begin{document}
%\begin{frontmatter}

\title{ On specific factors in graphs}

\author{Csilla Bujt\' as}

\affil{\small Faculty of Mathematics and Physics, University of Ljubljana, Slovenia}
%[julius.czap@tuke.sk]

\author{
Stanislav Jendro{\ml}\,}

\affil{Institute of Mathematics, P.~J.~\v Saf\' {a}rik University, Jesenn\'{a} 5, 040 01 Ko\v sice, Slovakia}
%\ead{stanislav.jendrol@upjs.sk}

\author{Zsolt Tuza}

\affil{Alfr\'ed R\'enyi Institute of Mathematics, Budapest,
 and University of Pannonia, Veszpr\'{e}m, \vspace{-4ex} Hungary}

%\end{frontmatter}
	
	%\title{Caterpillar factors in graphs}
	%\date{}
	%\maketitle
	
\date{}
\maketitle
	
\abstract{% We have proved the following: If
 It is well known that if $G = (V, E)$} is a connected multigraph and $X\subset V$ is a subset of even order,
% of the vertex set $V(G)$, 
then $G$ contains a spanning forest $H$ such that each vertex from $X$ has an odd degree in $H$ and all the other vertices have an even degree in $H$. This spanning forest may have isolated vertices. If this is not allowed in $H$, then the situation is much more complicated. In this paper, we study this problem and generalize the concepts of even-factors and odd-factors in a unified form. 
%In the second part of the paper we investigate factors whose components are caterpillars of given properties.

\section{Notation and Terminology}

Let us first present some of the basic definitions, notations and terminology used in this paper. Other terminology will be introduced as it naturally occurs in the text or is used according to West's book \cite{West-2001}. We denote the vertex set and the edge set of a graph $G$ by $V(G)$ and $E(G)$, respectively.
% A \emph{loop} is an edge whose two endpoints are the same. \emph{Multiple edges} (also called \emph{parallel edges}) are edges having the same pair of endpoints. A loop is also considered as a cycle of length $1$, while two parallel edges form a cycle of length $2$.

Throughout this paper we use the term \emph{graph} in the general sense where both loops and multiple edges are allowed, hence cycles of length one (loop) or two (a pair of parallel edges) may also occur. A \emph{simple graph} is a graph having no loops or multiple edges.

%A \emph{path} in a graph $G$ is an alternating sequence of distinct vertices and edges beginning and ending with vertices such that each edge in the sequence joins the vertex before it to the one following it. The \emph{length} of a path $P$ is the number of edges in $P$. A graph is \emph{connected} if each pair of vertices in $G$ is joined by a path in $G$. Otherwise, it is \emph{disconnected}. A \emph{cycle} in $G$ either is a loop (cycle of length 1) or has two parallel edges (cycle of length 2) or consists of a path of length at least two together with an edge joining the first and the last vertices of the path. 

%A graph $H$ is a \emph{subgraph} of a graph $G$ if $V(H) \subseteq V(G)$ and $E(H) \subseteq E(G)$. A subgraph $H$ of $G$ \emph{spans} $G$ if $V(H)=V(G)$; in this situation $H$ is also called a \emph{spanning subgraph} of $G$. A \emph{component} of $G$ is a maximal connected subgraph of $G$. A set of vertices $S\subseteq V(G)$ is a \emph{cutset} of the connected graph $G$ if $G - S$ is disconnected. A \emph{tree} is a connected graph containing no cycle. A \emph{forest} is a graph each component of which is a tree. If $e$ is an edge in a connected graph $G$ such that $G - e$ is disconnected, we say that $e$ is a \emph{bridge} or \emph{cut-edge}.

The \emph{degree} of a vertex $v$, denoted by $\mathrm{deg}_G(v)$ or simply by $\mathrm{deg}(v)$ when the underlying graph is understood, is the number of edges incident with the vertex, where any loop is counted twice. The minimum degree in a graph $G$ will be denoted by $\delta(G)$ and the maximum degree by $\Delta(G)$. A graph is \emph{$r$-regular} if the degree of each vertex in $G$ is $r$, and the graph is \emph{regular} if it is $r$-regular for some $r$.
% A set of vertices is \emph{independent} if no two of them are joined by an edge.
% The largest cardinality of an independent set in $G$ is called the \emph{independence number} of $G$ and is denoted by $\alpha(G)$. 
A set of edges in $G$ is a \emph{matching} if no two of them share a vertex. A \emph{perfect matching} (or \emph{1-factor}) in $G$ is a matching the edges of which span $G$.

\section{Introduction}

%Let $G$ be a graph and $H$ its subgraph. The subgraph $H$ is a \emph{spanning subgraph} if $V(H) = V(G)$. 
Given a graph $G$, we shall use the term \emph{p-factor} for a subgraph $H \subseteq G$ if
  $H$ is a spanning subgraph and has minimum degree $\delta(H) \geq 1$.
 A p-factor will also be referred to as a set of edges from $G$ that cover all the vertices of $G$.
 The letter p is intended to emphasize that all degrees are required to be positive, as opposed to the standard terms of factors and spanning subgraphs.

There is a very rich literature concerning factors of graphs, starting with the famous work of Petersen \cite{Pet-1891}. Several nice survey papers on this subject written by Chung and Graham \cite{ChGr-1981}, Akiyama and Kano \cite{AkKa-1985}, Volkmann \cite{Vol-1995}, and Plummer \cite{Plu-2007}, and the book of Akiyama and Kano \cite{AkKa-2011} together cover results of over one thousand papers.
Beyond the study of $1$-factors and $2$-factors in regular graphs as initiated in \cite{Pet-1891}, generalizations include $k$-factors, path-factors, even-factors, odd-factors, and more,
culminating in the ``Parity $(g,f)$-Factor Theorem'' proved by Lov\'{a}sz \cite{Lov-1970}.

The most general notion dealing with prescribed degrees for the vertices independently of each other is \emph{$B$-factor}, where a graph $G=(V,E)$ is given together with sets $B_v$ of nonnegative integers for its vertices, and one asks for a spanning subgraph $F$ such that $\deg_F(v)\in B_v$ holds for all $v\in V$.
Regarding the algorithmic complexity of this problem, Cornu\'ejols \cite{Corn-1988} proved the following important result.

\begin{thm}   \label{B-fact}
 There is an algorithm of running time\/ $O(n^4)$ which solves the\/ $B$-factor problem for any instance\/ $(G,\{B_v\mid v\in V\})$ on graphs\/ $G$ of order\/ $n$, provided that each\/ $B_v$ satisfies the following property: if an integer\/ $k\notin B_v$ is in the range\/ $\min(B_v)<k<\max(B_v)$, then both\/ $k-1$ and\/ $k+1$ are in\/ $B_v$.
\end{thm}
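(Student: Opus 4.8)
The plan is to reduce the $B$-factor problem to a single maximum-matching computation in an auxiliary graph $G'$ built from $G$ by local gadget replacement, and then to invoke a blossom-type matching algorithm. First I would normalise the instance: since $\deg_F(v)\le\deg_G(v)$ for any spanning subgraph $F$, I may replace each $B_v$ by $B_v\cap\{0,1,\dots,\deg_G(v)\}$ without changing the answer, and I would collapse each bundle of parallel edges between a fixed pair of vertices into one edge of bounded multiplicity (capped at $n$), so that the effective number of edges is $O(n^2)$. The gap hypothesis is inherited by the truncated sets. The target equivalence is ``$G$ has a $B$-factor $\iff$ $G'$ has a perfect matching'', so that one matching computation decides the problem and, in the positive case, exhibits a witness.

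The heart of the construction is a \emph{vertex gadget}. For each vertex $v$ I would create $\deg_G(v)$ \emph{ports}, one per incident edge, and build a small gadget $\Gamma_v$ on these ports with the realisability property: for every subset $S$ of ports, $\Gamma_v$ has a matching saturating all its vertices \emph{except exactly the ports in} $S$ if and only if $|S|\in B_v$. Each edge $e=uv$ of $G$ is then realised by joining the $e$-port of $u$ to the $e$-port of $v$ by a single ``edge edge'' (Tutte-style); placing this edge into the matching will encode ``$e\in F$'', so that the number of ports of $v$ saturated outward equals $\deg_F(v)$. Granting the gadget property, a perfect matching of $G'$ exists exactly when one can pick $F\subseteq E(G)$ with $\deg_F(v)\in B_v$ for all $v$. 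I would verify both directions by reading $F$ off from the edge edges chosen by the matching and, conversely, extending any $B$-factor to a perfect matching gadget by gadget.

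The main obstacle is the existence and the succinctness of $\Gamma_v$, and this is exactly where the gap condition enters. When $B_v$ is an interval $\{a,\dots,b\}$ the gadget is routine: start from the classical $f$-factor gadget with $f=a$, which forces exactly $a$ ports outward, and adjoin $b-a$ auxiliary vertices, each attached so that matching it to an internal partner frees one further port to go outward; this makes every outward count in $[a,b]$ feasible. The delicate case is an \emph{isolated hole}: an integer $k$ with $k\notin B_v$ but $k-1,k+1\in B_v$, which by hypothesis is the only kind of hole that can occur. For each such $k$ I would splice a small parity sub-gadget onto the interval gadget that locally deletes the single outward value $k$ while keeping $k-1$ and $k+1$ feasible; the presence of \emph{both} neighbours is precisely what allows such a one-point deletion to be simulated by a matching gadget, and this is the combinatorial core of the argument (and of Lov\'asz's realisability theory). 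Assembling one interval gadget together with at most $\deg_G(v)$ hole sub-gadgets keeps $|V(\Gamma_v)|$ and $|E(\Gamma_v)|$ linear in $\deg_G(v)$.

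Finally I would account for the running time. Summing gadget sizes gives $|V(G')|,|E(G')|=O\!\left(\sum_v\deg_G(v)\right)=O(n^2)$ after the preprocessing above. A blossom-based maximum-matching algorithm, whose cost is $O(|V|\cdot|E|)$, then runs in total time $O(n^2\cdot n^2)=O(n^4)$, decides whether a perfect matching (hence a $B$-factor) exists, and recovers $F$ within the same bound. The one quantitative point to watch is that the gadgets must be the linear-size variants rather than the complete-bipartite Tutte gadgets, since the latter would inflate $|E(G')|$ to $O(n^3)$ and break the $O(n^4)$ estimate.
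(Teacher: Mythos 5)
Your reduction collapses at its central claim, the existence of the vertex gadget $\Gamma_v$, and it does so for a parity reason that already kills the ``routine'' interval case, not just the holes. You require that $\Gamma_v$ have a matching saturating exactly $V(\Gamma_v)\setminus S$ if and only if $|S|\in B_v$. But a matching saturating exactly $V(\Gamma_v)\setminus S$ forces $|V(\Gamma_v)|-|S|$ to be even, so every feasible $|S|$ must have the same parity as the fixed number $|V(\Gamma_v)|$. Hence no gadget with your stated property can exist as soon as $B_v$ contains two integers of different parity --- e.g.\ already for $B_v=\{1,2\}$, a gap-free interval satisfying the hypothesis. Your proposed freeing mechanism (auxiliary vertices that, when matched to an internal partner, ``free one further port'') cannot evade this: in a perfect matching of $G'$ the auxiliary vertex must be saturated either way, so each local alternation changes the set of outward ports by an even amount and parity is conserved. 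This is exactly why the classical Tutte-style gadgets realize only parity intervals $\{a,a+2,\dots,b\}$ (and exact values), while genuine intervals need a non-local trick such as a shared parity-absorbing apex --- and you proposed neither.

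The second, independent gap is that the isolated-hole sub-gadget --- the step you yourself call the combinatorial core --- is asserted, not constructed, and it is precisely the part that is not believed to be doable by matching gadgets at all: no reduction of general gap-$\le 1$ sets (e.g.\ $B_v=\{1,2,4\}$) to perfect matching is known, which is why Cornu\'ejols' paper does not argue this way. His $O(n^4)$ algorithm works directly on $G$ with an augmenting-walk (blossom-type) procedure generalizing Edmonds' algorithm to general factors, using the gap condition to control the alternating structure, rather than translating $B_v$ into local matchability. Note also that the paper you are working from offers no proof of this theorem --- it is quoted from Cornu\'ejols \cite{Corn-1988} --- so the comparison here is with that source; relative to it, your outline substitutes an appealing but unobtainable gadget for the actual algorithmic content. (A minor further point: the paper's graphs may have loops, which your port model ignores; a loop contributes $2$ to a degree and would need separate handling even if the gadgets existed.)
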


Connected factors, especially spanning trees, of specific properties have been extensively studied as well; see e.g.\ Chapter 8 in \cite{AkKa-2011} and surveys in the papers \cite{Has-2015}, 
\cite{OzY-2011}, and \cite{Tho-2008}. 
From that area we will employ the following result of Thomassen \cite{Tho-2008}.

\begin{thm} \label{thm:lowtree1} 
	%( \rm{Thomassen \cite{Tho-2008}} )
	Every\/ $2$-edge-connected graph\/ $G$ has a spanning tree\/ $T$ such that, for each vertex\/ $v$,\/ $\deg_T(v) \leq \frac{\deg_G(v) + 3}{2}$.
\end{thm}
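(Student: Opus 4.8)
The plan is to work with the \emph{excess} $\mathrm{exc}_T(v) := 2\deg_T(v) - \deg_G(v)$ of a vertex $v$ in a spanning tree $T$. Since $\deg_T(v)\le \frac{\deg_G(v)+3}{2}$ is equivalent to $\mathrm{exc}_T(v)\le 3$, it suffices to produce a spanning tree in which every excess is at most $3$. Note that $\mathrm{exc}_T(v)\equiv \deg_G(v)\pmod 2$ is determined by $G$ alone, so for a vertex of even degree the real target is $\mathrm{exc}_T(v)\le 2$. I would argue by an extremal/exchange method: among all spanning trees of $G$ choose one, $T$, that is best in the lexicographic order which first minimizes the maximum excess $\mu:=\max_v \mathrm{exc}_T(v)$ and then minimizes the number $|W|$ of vertices attaining it, where $W:=\{v:\mathrm{exc}_T(v)=\mu\}$. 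The goal becomes to show $\mu\le 3$, so assume for contradiction $\mu\ge 4$.

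The basic move is edge exchange. For a non-tree edge $e=xy$, adding $e$ to $T$ creates the fundamental cycle $C_e$, and deleting any tree edge of $C_e$ yields a new spanning tree; the only degrees that change are those of $x,y$ (up by $1$) and of the two ends of the deleted edge (down by $1$), so each affected excess changes by exactly $\pm 2$. The structural input is $2$-edge-connectivity. Fix $v\in W$ and root $T$ at $v$; its $\deg_T(v)$ incident tree edges split $T-v$ into branches $B_1,\dots,B_{\deg_T(v)}$. For each $i$ the tree edge joining $v$ to $B_i$ is not a bridge of $G$, so there is a non-tree edge leaving $B_i$; it runs either to $v$ (call $B_i$ \emph{inner}) or to another branch (call $B_i$ \emph{crossing}). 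Distinct inner branches consume distinct non-tree edges incident to $v$, so there are at most $\deg_G(v)-\deg_T(v)$ of them, whence the number of crossing branches is at least $\deg_T(v)-(\deg_G(v)-\deg_T(v))=\mathrm{exc}_T(v)=\mu\ge 4$.

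Each crossing branch $B_i$, via its non-tree edge $xy$ with $x\in B_i$ and $y$ in another branch, provides an exchange: add $xy$ and delete the tree edge from $v$ into $B_i$ (it lies on $C_{xy}$, since the tree path from $x$ to $y$ passes through $v$). The two ends of the deleted edge, namely $v$ and its neighbour in $B_i$, fall by $2$, so $v$ leaves $W$, while $x$ and $y$ rise by $2$. If some crossing branch offers such an edge whose endpoints both have excess at most $\mu-4$, the single exchange keeps the maximum at $\mu$ yet strictly decreases $|W|$, since $v$ leaves $W$ and no new vertex enters it; this contradicts the choice of $T$. Hence it remains to treat the case where \emph{every} available re-routing edge meets a vertex of excess $\mu-2$ or $\mu$.

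This last case is the main obstacle, and I would resolve it globally rather than by a single swap, in the spirit of the F\"urer--Raghavachari analysis of degree-constrained spanning trees. The idea is to explore outward from $W$ along crossing branches and their re-routing edges, assembling an alternating sequence of candidate exchanges; since every maximum-excess vertex has at least $\mu\ge 4$ crossing branches, there is enough room that the exploration either closes into a composite exchange whose net effect strictly reduces the pair $(\mu,|W|)$, or else exhibits a local configuration forbidden by the absence of bridges, contradicting $2$-edge-connectivity. Controlling how the excesses of intermediate vertices move along a chain of exchanges and guaranteeing a strict improvement at the end is the delicate heart of the argument; everything preceding it is routine. The elementary example of two vertices sharing many common neighbours, where the neighbours must be split between the two rather than all hung on one, already shows why balancing exchanges are forced and why the bound naturally takes the form $\tfrac12\deg_G(v)$ plus an additive constant.
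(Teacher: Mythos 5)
There is a genuine gap, and you name it yourself: the case in which every available exchange edge ends at a vertex of high excess is ``the delicate heart of the argument,'' and your proposal contains no actual argument for it. Everything before that point --- the excess potential $\mathrm{exc}_T(v)=2\deg_T(v)-\deg_G(v)$, the extremal choice of $T$, the fundamental-cycle swaps, and the count showing a vertex $v$ with $\mathrm{exc}_T(v)=\mu\ge 4$ has at least $\mu$ crossing branches --- is sound but routine; the theorem lives entirely in the case you defer. An appeal to ``the spirit of F\"urer--Raghavachari'' does not substitute for a construction: in their setting the degree target is uniform, whereas here the target varies with $\deg_G(v)$, so a swap that lowers $v$ can push a vertex of excess $\mu-2$ up into the critical set, and your lexicographic potential $(\mu,|W|)$ gives no strict decrease under such cascading exchanges. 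To make a chain of composite exchanges terminate one typically needs a finer potential (say, minimizing the sorted multiset of excesses, or a weighted sum $\sum_v c^{\,\mathrm{exc}_T(v)}$) together with a concrete argument that $2$-edge-connectivity always supplies an improving sequence; you assert the dichotomy ``composite exchange or configuration forbidden by the absence of bridges'' without exhibiting either horn. There is also a small slip in the case analysis: an endpoint of old excess $\mu-1$ (possible, since excess parity varies from vertex to vertex) would be pushed to $\mu+1$, raising the maximum, so the blocked endpoints have excess in $\{\mu-2,\mu-1,\mu\}$, not just $\{\mu-2,\mu\}$; and when the exchange edge is incident with the attachment vertex $b_i$ of the branch, the added and deleted edges share an endpoint, so its excess is unchanged rather than raised.

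For calibration: the paper does not prove this statement at all --- it is quoted as Theorem~\ref{thm:lowtree1} from Thomassen's paper \cite{Tho-2008}, where the decreasing-potential exchange argument is carried out in full. So there is no internal proof to compare against; judged on its own, your text is a plausible opening for such a proof, but as it stands it establishes only that a minimum-excess tree with $\mu\ge 4$ admits many candidate swaps, not that one of them (or any finite composition of them) improves the tree, and hence it does not prove $\mu\le 3$.
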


In this paper we introduce a new concept which is the generalization of both, the even-factor and the odd-factor.

Let $G = (V, E)$ be a graph and let $X\subseteq V$ be a set of an even number of vertices. We say that a p-factor $H$ of $G$ is an \emph{X-parity-factor} of $G$ if $\deg_H(v)\equiv 1 \pmod{2}$ for every vertex $v \in X$, and $\deg_H(v)\equiv 0\pmod{2}$ for every $v \in V \setminus X$.
We emphasize that $\deg_H(v) > 0$ is required for all $v\in V$, by definition.

A graph $G = (V, E)$ has the \emph{strong parity property} if for every subset $X\subseteq V$ of an even number of vertices the graph has an $X$-parity-factor.
% After some preliminary results in Section~\ref{sec:3}, 
We give sufficient conditions for graphs to have this property, and formulate a related conjecture in Section~\ref{sec:4}.

Note that connectivity is an obvious necessary condition for the strong parity property, since an $X$ with $|X|=2$, having its two vertices from distinct components does not admit an $X$-parity-factor.
However, not every connected graph has this property, as we shall note at the beginning of the next section.
On the other hand, replacing the requirement of `p-factor' with `spanning subgraph', the necessary condition of connectivity becomes also sufficient, as shown by the following result\footnote{The existence of $H$ with the required parity properties easily follows by first selecting $r=|X|/2$ paths whose ends are mutually disjoint pairs of vertices from $X$, and then keeping exactly those edges for $H$ which occur in an odd number of the selected paths.
If a cycle $C\subset G$ violates the extra condition, then switching between selection and non-selection of its edges makes $|E(H)|$ decrease, without changing the parity of any $\deg_H(v)$.
Theorem \ref{thm:spanning1} later led to the development of the theory of $T$-joins; see e.g.\ Chapters 6.5 and 6.6 in \cite{LovPlu-1986}, or the survey \cite{Fra-1994}.} of Meigu Guan (whose name is also romanized as Mei-Ko Kwan).

\begin{thm}
	\label{thm:spanning1}
	If\/ $G$ is a connected graph and\/ $X \subseteq{V(G)}$ is an arbitrary subset of\/ $2r$ vertices of\/ $G$, then\/  $G$ has a spanning forest\/ $H$ such that
	\begin{itemize}[leftmargin=0.7in]		
	\item {$\deg_H(v) \equiv 1 \pmod{2}$ for any vertex\/ $v \in X$}.
	\item {$\deg_H(v) \equiv 0 \pmod{2}$ for any vertex\/ $v \in V(G) \setminus X$, where\/ $\deg_H(v) = 0$ is allowed}.
\end{itemize} 	
Moreover, in those subgraphs\/ $H$ of this kind which have minimum size, every cycle\/ $C\subset G$ has at most half of its edges in\/ $H$.
\end{thm}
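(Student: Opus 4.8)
The plan is to recast the statement as follows: we seek an edge subset $F \subseteq E(G)$ whose associated spanning subgraph $(V(G),F)$ has odd-degree set exactly equal to $X$, and among all such $F$ we take one of minimum cardinality. The single device that I would use for everything is \emph{switching along a cycle}: if $D \subseteq E(G)$ is the edge set of any cycle of $G$, then passing from $F$ to the symmetric difference $F \triangle D$ leaves every degree unchanged modulo $2$, because each vertex is incident to an even number of edges of $D$. Thus the family of admissible edge sets is closed under switching along cycles, and this closure drives both the existence claim and the two minimality claims.

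First I would show that an admissible $F$ exists at all. I would partition $X$ arbitrarily into $r$ pairs $\{a_i,b_i\}$ and, using connectivity of $G$, choose a path $P_i$ joining $a_i$ to $b_i$; then set $F = P_1 \triangle \cdots \triangle P_r$. Since in each $P_i$ the endpoints have degree $1$ and the interior vertices have degree $2$, the degree of any vertex $v$ in $(V(G),F)$ is congruent modulo $2$ to the number of paths $P_i$ having $v$ as an endpoint. This count equals $1$ for every $v \in X$ and $0$ for every $v \notin X$, so the odd-degree set of $F$ is exactly $X$, as required.

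Next I would fix an admissible $F$ of minimum size and put $H = (V(G),F)$. If $H$ contained a cycle $C$, then $F \triangle E(C)$ would still be admissible yet strictly smaller, a contradiction; in particular this excludes loops and repeated parallel edges. Hence every component of $H$ is a tree, and since $V(H)=V(G)$ by construction, $H$ is the desired spanning forest. For the ``moreover'' statement I would take any cycle $C$ of $G$ and suppose that more than half of its edges lie in $F$; then $|F \triangle E(C)| = |F| + |E(C)| - 2\,|F \cap E(C)| < |F|$, while $F \triangle E(C)$ remains admissible, again contradicting minimality. Therefore every cycle of $G$ meets $H$ in at most half of its edges, and the forest property is just the special case in which a cycle would otherwise be wholly contained in $H$.

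I do not expect a substantive obstacle here; the only genuinely delicate point is the parity bookkeeping under symmetric difference in the multigraph setting, and the care required is essentially notational. One must treat a loop as contributing $2$ to the degree of its vertex, so that it counts as a length-$1$ cycle along which switching is legitimate, and a pair of parallel edges as a length-$2$ cycle. Once these conventions are in place, the cycle-switching argument applies uniformly, and the heart of the proof reduces to the closure of the admissible family under switching together with an extremal minimum-size choice.
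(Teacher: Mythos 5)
Your proof is correct and takes essentially the same route as the paper's own argument (sketched in the footnote attached to Theorem~\ref{thm:spanning1}): existence by pairing $X$ into $r$ pairs joined by paths and keeping exactly the edges occurring in an odd number of them, which is precisely your symmetric difference $P_1\triangle\cdots\triangle P_r$, and both the forest property and the ``at most half the edges of any cycle'' property by switching $F\triangle E(C)$ along a cycle, which preserves all degree parities while strictly decreasing size. Your explicit remark that loops and parallel edge pairs count as cycles of length $1$ and $2$ tidies up the multigraph bookkeeping but adds nothing substantively different.
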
 

%A \emph{caterpillar} is a nontrivial simple graph, with at least one edge, that consists of a path of order at least~$1$ and some (possibly zero) leaves being adjacent to the vertices of the path. We say that a caterpillar is \emph{even} (resp.\ \emph{odd}) if every vertex of degree at least 2 has an even degree (resp.\ an odd degree). Observe that $K_2$ is viewed both as an even and an odd caterpillar.
%Given a simple undirected graph $G=(V,E)$, a \emph{caterpillar factor} of $G$ is a set of caterpillar subgraphs of $G$ such that each vertex $v\in V$ belongs to exactly one of them. In Sections~\ref{sec:5} and \ref{sec:6} we investigate the existence of even and odd caterpillar factors in trees and bipartite graphs. We present complexity results for the corresponding decision problems.

%xxxxxxxxxxxxxxxxxxxxxxxxxxxxxxxxxxxxxxxxxxxxxxxxxxxxxxxxxxxxxxxxxxxxxxxxxxxxxxxxxxxxxxxxxxxxxxxxxxxxxxxxxxxxxxxxxxxxxxxxxxxxxxxxxxxxxxxxxxxxxxxxxxxxxxxxxxxxxx
 
\section{The Strong Parity Property}
\label{sec:4}

It is a challenging problem to establish a nice general characterization for graphs satisfying the strong parity property. Hence, we concentrate on conditions which are necessary or sufficient for it.
First we mention some simple local obstructions, and also observe a complexity result.
Then we give some sufficient conditions for graphs to have the strong parity property. At the end we formulate a conjecture that can be considered as a strengthening of Theorem~\ref{thm:delta-4} and Theorem~\ref{thm:Ham} below, and prove it for 3-regular graphs.

\begin{prop} \label{prop:4}
 If a connected graph\/ $G=(V,E)$ contains any of the following, then it does not have the strong parity property:
 \begin{itemize}
   \item[$(i)$] a vertex\/ $v$ of degree\/ $1$;
   \item[$(ii)$] a path\/ $v_1v_2v_3$ with\/ $\deg_G(v_1)=\deg_G(v_2)=\deg_G(v_3)=2$,\/ $|V|>3$;
   \item[$(iii)$] a path\/ $v_1v_2v_3$ and a further vertex\/ $v_4$, such that\/ $\deg_G(v_1)=\deg_G(v_3)=2$,\/ $\deg_G(v_2)=3$,\/ $v_2v_4$ is a cut-edge of\/ $G$, and the component containing\/ $v_2$ in\/ $G-v_2v_4$ has order at least\/ $4$.
 \end{itemize}
\end{prop}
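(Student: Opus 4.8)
The plan is to refute the strong parity property in each case by exhibiting a single even-sized set $X\subseteq V$ that admits no $X$-parity-factor. Two elementary tools will be used throughout. First, a \emph{local forcing} principle: in any $X$-parity-factor $H$, a vertex $v\notin X$ of degree $2$ in $G$ must have both of its edges in $H$ (since $\deg_H(v)$ is even and, as $H$ is a factor, at least $1$), while a vertex of degree $1$ always satisfies $\deg_H(v)=1$. Second, a \emph{global parity} principle: if $S\subseteq V$ and $F$ is the set of edges with exactly one endpoint in $S$, then $\sum_{v\in S}\deg_H(v)\equiv |E(H)\cap F|\pmod 2$; since the left-hand side counts the odd-degree vertices of $S$, it is also congruent to $|X\cap S|\pmod 2$.

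Case $(i)$ is immediate: I would take $X=\varnothing$, so that an $X$-parity-factor is exactly an even-factor. The degree-$1$ vertex $v$ is forced to have $\deg_H(v)=1$, which is odd, so no even-factor exists.

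For case $(ii)$, since $|V|>3$ I would choose a vertex $w\notin\{v_1,v_2,v_3\}$ and set $X=\{v_2,w\}$. As $v_2\in X$ has degree $2$, exactly one of its edges $v_1v_2,v_2v_3$ lies in $H$; say $v_2v_3\notin H$ (the other case is symmetric). Then at $v_3$, which has degree $2$ and lies outside $X$, the edge $v_2v_3$ is absent, so local forcing gives $\deg_H(v_3)=1$, odd — a contradiction.

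Case $(iii)$ is the crux, and the only place where the quantitative hypotheses are used. Let $C$ be the vertex set of the component of $v_2$ in $G-v_4$; then $v_1,v_2,v_3\in C$, the hypothesis $|C|\ge 4$ supplies a further vertex $z\in C\setminus\{v_1,v_2,v_3\}$, and because $v_2v_4$ is a cut-edge it is the \emph{only} edge joining $C$ to $V\setminus C$. I would set $X=\{z,v_4\}$, which has even size and is disjoint from $\{v_1,v_2,v_3\}$. Local forcing at $v_1$ and $v_3$ (degree $2$, outside $X$) places both $v_1v_2$ and $v_2v_3$ into $H$, whence $\deg_H(v_2)=2+[v_2v_4\in H]$; since $v_2\notin X$ this must be even, forcing $v_2v_4\notin H$. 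Applying the global parity principle with $S=C$ and $F=\{v_2v_4\}$ then yields $|X\cap C|\equiv |E(H)\cap F|=0\pmod 2$, yet $X\cap C=\{z\}$ has odd size — a contradiction. I expect this last case to be the main obstacle, as it is where one must combine the cut-edge parity count over the entire side $C$ with the local degree forcing, and where the roles of the order-$\ge 4$ and connectivity hypotheses (guaranteeing both the extra vertex $z\in C$ and the companion vertex $v_4\notin C$ that keeps $|X|$ even) become essential.
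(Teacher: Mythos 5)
Your proposal is correct. Parts $(i)$ and $(ii)$ coincide with the paper's argument: in $(ii)$ the paper forces both edges at $v_1$ and at $v_3$ into the factor, making $\deg_F(v_2)=2$ even, while you force $\deg_H(v_2)=1$ and contradict at $v_1$ or $v_3$ --- the same obstruction, with the same set $X$, read in the opposite direction. In $(iii)$, however, you take a genuinely different and simpler route. The paper prescribes $X$ on the \emph{entire} component $H'$ of $v_4$ in $G-v_2$ (putting $v\in X$ exactly when $\deg_{H'}(v)$ is odd), uses the handshake lemma inside $H'$ to propagate the parity congruence to $v_4$ and thereby conclude $v_2v_4\notin F$, and then finishes by reducing to case $(ii)$ on the $v_2$-side, which is why its $(iii)$-prescription repeats the $(ii)$-prescription there. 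You instead take the two-element set $X=\{z,v_4\}$: the local degree constraints at $v_1,v_3$ (degree $2$, outside $X$) and at $v_2$ (degree $3$, outside $X$) already force $v_2v_4\notin H$, and a single cut-parity count over $C$ --- legitimate, since the cut-edge hypothesis indeed makes $v_2v_4$ the unique edge leaving $C$, as a second such edge or a parallel copy would put $v_2v_4$ on a cycle --- contradicts the fact that $|X\cap C|=1$ is odd. So the logical flow is inverted: the paper derives $v_2v_4\notin F$ by a parity count on the $v_4$-side and reaches the contradiction locally on the $v_2$-side, whereas you derive $v_2v_4\notin H$ locally on the $v_2$-side and reach the contradiction by a parity count on that same side. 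Your version buys a minimal witness set (always $|X|\le 2$) and a self-contained case $(iii)$ that never invokes $(ii)$; the paper's version illustrates the more flexible device of prescribing $X$ to match the degree parities of an induced subgraph, and its modular reuse of case $(ii)$ is a technique worth knowing. Both arguments use the hypotheses in the same essential places: $|C|\geq 4$ supplies your $z$ (the paper's $w$), and the cut-edge supplies the unique crossing edge whose exclusion drives the parity contradiction.
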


\begin{proof}
In each case we prescribe some vertices in and out of the set $X$, which will make it impossible to satisfy the parity conditions with a spanninng subgraph of all-positive degrees.
 \begin{itemize}
   \item[$(i)$] Just require $v\notin X$. This would need at least two edges incident with $v$.

   \item[$(ii)$] We prescribe $v_2\in X$ and $v_1,v_3\notin X$, plus a further vertex $w\in X$ distinct from $v_1,v_2,v_3$. Then an $X$-parity-factor $F$ would require all the four edges incident with $v_1$ and $v_3$, but then $v_2$ cannot have odd degree in $F$.

   \item[$(iii)$] Let $H$ be the component of $G-v_2v_4$ containing $v_4$. For each $v\in V(H)$ we prescribe $v\in X$ if and only if $\deg_H(v)$ is odd. Further, for the vertices in the component containing\/ $v_2$ in\/ $G-v_2v_4$ we set the conditions as in the preceding case $(ii)$.
   
   Suppose for a contradiction that there exists an $X$-parity-factor $F$ in $G$. Then $\deg_{F\cap E(H)}(v)\equiv\deg_H(v)$ (mod~2) holds for all $v\in V(H)\setminus\{v_4\}$. But then, since the number of odd degrees in $H$ --- as well as in $F\cap E(H)$ --- is even, the same congruence is valid for $v_4$, too. Consequently the edge $v_2v_4$ cannot occur in $F$. This leads to the contradiction that the restriction of $F$ to the subgraph induced by $V(G)\setminus V(H)$ would be a parity factor for $(ii)$.
   \vspace{-3.8ex}
 \end{itemize}
\end{proof}

 We say that a class ${\cal G}$ of graphs admits a \emph{forbidden induced subgraph characterization} if there is a (finite or infinite) class ${\cal F}$ of graphs such that a graph $G$ belongs to ${\cal G}$ if and only if $G$ contains no induced subgraph which is isomorphic to an $F\in {\cal F}$. The notion of \emph{forbidden subgraph characterization} is defined analogously. Proposition~\ref{prop:4} shows various possibilities for extending a graph $F$  to a graph $F'$ such that $F$ is an induced subgraph of $F'$ and the latter one does not satisfy the strong parity property. This directly implies the following statement.

\begin{cor}
 The class of graphs not having the strong parity property does not admit a forbidden (induced) subgraph characterization.
\end{cor}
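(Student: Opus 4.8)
The plan is to recast the statement in terms of heredity. For a class of graphs, a ``forbidden induced subgraph characterization'' exists precisely when the class is closed under taking induced subgraphs. More precisely, writing $\mathcal{C}$ for the class of graphs \emph{without} the strong parity property, suppose $\mathcal{C}$ were characterized by a family $\mathcal{F}$ in the sense that $G\in\mathcal{C}$ if and only if some member of $\mathcal{F}$ occurs in $G$ as an induced subgraph. Then, as I would verify in one line, the complementary class $\mathcal{D}$ of graphs \emph{with} the strong parity property must be hereditary: if $G'\in\mathcal{D}$ and $G$ is an induced subgraph of $G'$, then any induced copy of an $F\in\mathcal{F}$ inside $G$ would also be an induced copy inside $G'$, forcing $G'\in\mathcal{C}$, a contradiction. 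Hence the whole Corollary reduces to exhibiting a single witness showing that the strong parity property is \emph{not} preserved under passing to induced subgraphs.

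The main work is then to produce a graph $G'\in\mathcal{D}$ having an induced subgraph $G\in\mathcal{C}$. I would take $G=K_2$ and $G'=K_4$. That $K_2\in\mathcal{C}$ is immediate from part $(i)$ of the preceding Proposition (equivalently: for $X=\emptyset$ an $X$-parity-factor would be an even-factor, but the only factor of $K_2$ is the single edge, of odd degrees). That $K_4\in\mathcal{D}$ I would check directly by running through the three essentially distinct choices of an even set $X\subseteq V(K_4)$, namely $|X|\in\{0,2,4\}$: for $X=\emptyset$ a Hamiltonian $4$-cycle is an even-factor; for a pair $X=\{a,b\}$ the five edges other than one of the non-$X$ edges give degrees that are odd on $X$ and even off $X$; and for $X=V(K_4)$ a perfect matching works. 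Since $K_2$ is an induced subgraph of $K_4$, this is exactly the required non-heredity witness.

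Combining the two steps finishes the argument: the existence of a forbidden induced subgraph characterization of $\mathcal{C}$ would force $\mathcal{D}$ to be hereditary, yet $K_2\in\mathcal{C}$ sits inside $K_4\in\mathcal{D}$ as an induced subgraph, so $\mathcal{D}$ is not hereditary and no such characterization can exist.

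I expect the only genuine obstacle to be the finite verification that the chosen ``large'' graph really does have the strong parity property for every even $X$; this is routine but must be done carefully, and one should also make sure the bad graph appears as an \emph{induced} (not merely spanning or arbitrary) subgraph, which is why the adjacent pair $K_2\subseteq K_4$ is convenient. As a remark, one could replace $K_4$ by larger examples meeting the sufficient conditions proved later (e.g.\ Theorem~\ref{thm:delta-4}) to make the witness as robust as desired, but the minimal pair above already suffices.
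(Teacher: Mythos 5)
Your write-up contains a convention slip that turns it into a proof of a \emph{different} statement from the paper. You open by saying (correctly) that a class admits a forbidden induced subgraph characterization precisely when that class is hereditary; but you then formalize the characterization of $\mathcal{C}$ (the graphs \emph{without} the strong parity property) as ``$G\in\mathcal{C}$ iff some member of $\mathcal{F}$ occurs in $G$ as an induced subgraph''. That is the avoidance characterization of the complementary class $\mathcal{D}$, not of $\mathcal{C}$, and the two formalizations are incompatible with each other. Under the standard reading that this Corollary must carry --- $\mathcal{C}$ admits the characterization iff $\mathcal{C}$ consists exactly of the graphs containing no member of $\mathcal{F}$, i.e., iff $\mathcal{C}$ itself is hereditary --- the required witness is oriented the opposite way from yours: one needs a graph \emph{without} the property having an induced subgraph \emph{with} it. Your pair ($K_2$ lacking the property, sitting inside $K_4$ having it) instead shows that $\mathcal{D}$ is not hereditary, which is precisely the content of the paper's \emph{next} Proposition (``The strong parity property does not admit any forbidden (induced) subgraph characterization''), proved there by the clique-extension construction. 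The paper clearly intends the Corollary and that Proposition as two distinct, dual statements --- otherwise it would not state and prove both --- so your argument, read charitably, establishes the Proposition rather than the Corollary, which the paper draws from the obstruction Proposition immediately preceding it.

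The good news is that your ingredients repair the proof in one line. Your verification that $K_4$ has the strong parity property (the three cases $|X|\in\{0,2,4\}$, which by symmetry exhaust everything) is correct and is the only nontrivial computation needed. Now attach a pendant vertex to $K_4$: the resulting connected graph fails the strong parity property by part~$(i)$ of the preceding Proposition, yet it contains $K_4$ as an induced subgraph. Hence $\mathcal{C}$ is not closed under taking induced subgraphs, and no forbidden (induced) subgraph characterization of $\mathcal{C}$ can exist --- which is the paper's implicit derivation: the obstructions of the Proposition are local defects that can be grafted onto any graph with the property, so graphs lacking the property can contain arbitrary graphs possessing it as induced subgraphs.
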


A similar statement is true for the complementary class.

\begin{prop}
	The class of graphs having the strong parity property does not admit any forbidden (induced) subgraph characterization.
\end{prop}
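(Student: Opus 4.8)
The plan is to pass to the standard hereditary reformulation. A graph property admits a forbidden induced subgraph characterization exactly when the class ${\cal P}$ of graphs enjoying it is hereditary (closed under taking induced subgraphs), and it admits a forbidden subgraph characterization exactly when ${\cal P}$ is closed under taking arbitrary subgraphs. Since the latter closure implies the former, it suffices to produce a single graph $G$ possessing the strong parity property together with an induced subgraph $H$ lacking it; this one witness refutes both kinds of characterization simultaneously. Note that this is precisely the direction complementary to the Corollary, whose natural witness is obtained by attaching a pendant edge to any graph in ${\cal P}$: part $(i)$ of the preceding Proposition throws the enlarged graph into the complement of ${\cal P}$, while the original graph survives as an induced subgraph.

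For $H$ I would take a chordless $4$-cycle $C_4$, which lies outside ${\cal P}$ by part $(ii)$ of the preceding Proposition: declaring the two vertices of one diagonal to be odd and the two vertices of the other diagonal to be even admits no $X$-parity-factor, because forcing the even vertices to have positive even degree pulls all four edges into the factor, whereupon the two odd vertices acquire degree $2$. For $G$ I would take $K_{3,3}$ with parts $\{a_1,a_2,a_3\}$ and $\{b_1,b_2,b_3\}$; since $a_1a_2$ and $b_1b_2$ are non-edges, the four vertices $a_1,a_2,b_1,b_2$ induce exactly a $C_4$. Thus $C_4$ is a genuine induced subgraph of $K_{3,3}$.

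It then remains to check that $K_{3,3}$ itself has the strong parity property. Here I would exploit its automorphism group, which acts as $S_3\times S_3$ together with the swap of the two parts: every even set $X$ is equivalent to one determined solely by its size and by how it splits between the parts, so only the orbits with splits $(0,0),(2,0),(1,1),(3,1),(2,2),(3,3)$ need to be treated. For $X=\emptyset$ a Hamiltonian $2$-factor works and for $X=V$ a perfect matching works; each remaining orbit reduces to realizing the prescribed parity pattern by a $3\times3$ $0/1$ matrix whose row and column sums have the required parities and are all positive, which is a one-line construction in each case.

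The only genuine computation is this last verification that $K_{3,3}\in{\cal P}$, and it is the step I would watch most carefully; it is short only because the symmetry collapses the exponentially many choices of $X$ to the six orbits above, yet one must still ensure that no orbit is forced to produce a zero degree, which would violate the factor condition. Granting it, the argument closes at once: ${\cal P}$ contains $K_{3,3}$ but not its induced subgraph $C_4$, so ${\cal P}$ is not hereditary, \emph{a fortiori} not closed under arbitrary subgraphs, and therefore the strong parity property admits neither a forbidden induced subgraph nor a forbidden subgraph characterization.
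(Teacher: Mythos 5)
Your proof is correct, but it takes a genuinely different route from the paper's. The paper does not exhibit a single non-hereditary witness; instead, for \emph{every} candidate forbidden graph $F$ it constructs one uniform extension --- add $|V(F)|$ new vertices completely joined to $F$ and to each other --- and shows by a matching-removal parity argument that this extension has the strong parity property, so that no graph at all can serve as a forbidden induced subgraph. You instead invoke the standard reformulation (any forbidden-induced-subgraph characterization forces the class to be hereditary, and any forbidden-subgraph characterization forces closure under subgraphs, which is stronger) and produce one concrete pair: $K_{3,3}$ has the property while its induced $C_4$ does not --- the latter both by item $(ii)$ of the preceding Proposition and by your direct argument, which is sound (the two ``even'' vertices must have positive even degree, hence degree $2$, pulling in all four edges and giving the two ``odd'' vertices even degree). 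This single-witness route is logically sufficient and more economical; what the paper's construction buys is the stronger structural fact that every graph occurs as an induced subgraph of a graph with the strong parity property, proved uniformly without case analysis. The one place your write-up is thinner than it should be is the phrase ``a one-line construction in each case'': to close the argument you should actually record the six $3\times 3$ matrices (rows and columns indexed by the two parts, entries indicating factor edges, row/column sums the degrees). They do exist: for split $(0,0)$ take the complement of the identity (all sums $2$); for $(3,3)$ the identity (all sums $1$); for $(2,0)$ rows $(1,1,1),(1,0,0),(0,1,1)$ with row sums $3,1,2$ and column sums $2,2,2$; for $(1,1)$ rows $(1,0,0),(0,1,1),(0,1,1)$; for $(3,1)$ rows $(1,1,1),(1,1,1),(1,0,0)$ with column sums $3,2,2$; for $(2,2)$ rows $(1,1,1),(1,1,1),(1,1,0)$ with column sums $3,3,2$. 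All sums are positive with the prescribed parities, so $K_{3,3}$ indeed has the strong parity property and your proof is complete. (One could also get $K_{3,3}\in{\cal P}$ from Theorem~\ref{thm:Ham}, since $K_{3,3}$ has a Hamiltonian path and minimum degree $3$, but that theorem appears later in the paper, so your self-contained orbit check is the appropriate choice at this point in the text.)
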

\begin{proof}
	Given any candidate $F$ for a forbidden induced subgraph, we supplement $F$ with $|V(F)|$ new vertices such that every new vertex is a universal vertex (i.e., it is adjacent to all vertices) in the extended graph.
	%, which are completely adjacent to $F$ and also to each other. 
	Clearly $|V(F)| > 2$. We claim that this extended graph admits the strong parity property, despite that it contains $F$ as an induced subgraph. Let $X$ be an arbitrary given set of even size. If a vertex $v$ of $F$ has the same degree parity in the extended graph as prescribed by $X$, we keep all edges at $v$. For the other vertices of $F$ we delete a matching $M$ from their set to the set of new vertices. 
	%(In the worst case this is a perfect matching between $F$ and the new vertices.) 
	Now consider the new vertices after the removal of $M$. Let $S$ be the set of vertices where the parity of current degree differs from what is prescribed by $X$. Note that also $S$ has even size, because the removal of each edge changes parity at exactly two vertices, and at the beginning (before the removal of $M$) we had an even number of odd degrees and also an even number of odd prescriptions by $X$, thus the symmetric difference of the two even sets was also even; this was modified by $-2$ or 0 or +2 by the removal of each matching edge. So, $|S|$ is even, and removing a perfect matching from the complete subgraph induced by $S$ we obtain an $X$-parity factor. Since we inserted more than two new vertices, the remaining graph after all the edge removals is still connected, and in particular all vertex degrees are positive.
\end{proof}

The definition of strong parity property puts a condition on exponentially many distributions of odd and even parities. 
For this reason, when just the formalization of the problem is considered, it is not trivial whether the corresponding decision problem belongs to any of the complexity classes NP and coNP. By definition, the problem of deciding whether a graph has a property ${\cal P}$ belongs to coNP, if and only if the decision problem of \emph{not} having property ${\cal P }$ belongs to NP.

%in the polynomial-time hierarchy of complexity classes it is $\Pi_2^p$ which surely contains the corresponding decision problem.
%We can go down one level in the hierarchy as follows.

\begin{thm}
 The decision problem, whether a generic input graph has the strong parity property, belongs to the class coNP.
\end{thm}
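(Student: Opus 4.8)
The plan is to prove membership in coNP by showing that the complement problem---deciding whether $G$ \emph{fails} the strong parity property---lies in NP; since coNP is exactly the class of problems whose complements are in NP, this suffices. By definition, $G$ fails the strong parity property if and only if there exists an even-sized set $X\subseteq V(G)$ for which $G$ has no $X$-parity-factor. A natural nondeterministic certificate for failure is therefore such a ``bad'' set $X$ itself, which has polynomial size, being merely a subset of the $n$ vertices.

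The crux is to verify such a certificate in polynomial time, i.e.\ to decide deterministically and efficiently whether, for a given $X$, the graph $G$ possesses an $X$-parity-factor. First I would recast this as a $B$-factor problem: for each vertex $v$ set $B_v$ to be the admissible degrees in a factor, namely all odd integers in $\{1,\dots,\deg_G(v)\}$ when $v\in X$, and all even integers in $\{2,\dots,\deg_G(v)\}$ when $v\notin X$. A spanning subgraph $H$ satisfies $\deg_H(v)\in B_v$ for every $v$ exactly when $H$ is an $X$-parity-factor, because membership in $B_v$ simultaneously encodes the prescribed degree parity and the positivity $\deg_H(v)\geq 1$ demanded of a factor.

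The key step is checking that each such $B_v$ satisfies the ``gap'' hypothesis of Theorem~\ref{B-fact}: whenever an integer $k\notin B_v$ lies strictly between $\min(B_v)$ and $\max(B_v)$, both $k-1$ and $k+1$ belong to $B_v$. For $v\in X$ the set $B_v$ consists of odd numbers, so every missing interior value $k$ is even and its neighbours $k\pm 1$ are odd, hence in $B_v$; the symmetric parity argument handles the case $v\notin X$. Thus Cornu\'ejols' algorithm applies and decides the existence of an $X$-parity-factor in $O(n^4)$ time.

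Putting the pieces together: given the certificate $X$, one confirms in polynomial time that $X$ has even size and that $G$ admits no $X$-parity-factor, so the complement problem is in NP and the original membership problem is in coNP. I expect the only genuine obstacle to be recognizing that the verification subroutine is tractable at all---namely spotting that the parity-constrained degree sets fall precisely within the reach of Theorem~\ref{B-fact}. That obstacle dissolves immediately once one observes that consecutive integers have opposite parities, so a set defined by a single parity can never leave two adjacent gaps; everything else is routine bookkeeping.
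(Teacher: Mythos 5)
Your proof is correct and follows essentially the same route as the paper: guess the bad even-sized set $X$ as the certificate of failure, encode the $X$-parity-factor question as a $B$-factor problem with $B_v$ the odd values in $\{1,\dots,\deg_G(v)\}$ for $v\in X$ and the even values in $\{2,\dots,\deg_G(v)\}$ otherwise, and invoke Cornu\'ejols' polynomial-time algorithm (Theorem~\ref{B-fact}) for the verification. Your explicit check of the ``gap'' hypothesis --- that a set consisting of a single parity class never has two consecutive missing interior values --- is a detail the paper leaves implicit, and it is handled correctly.
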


\begin{proof}
If $G=(V,E)$ does not have the strong parity property, then there is a subset $X\subseteq V$ for which no $X$-parity-factor exists. Calling for an NP-oracle we obtain an $X$ of this kind. 
Setting $B_v=\{k\mid 1\leq k\leq \deg(v), \, k\equiv 1 \ \mbox{\rm (mod~2)}\}$ for $v\in X$ and $B_v=\{k\mid 2\leq k\leq \deg(v), \, k\equiv 0 \ \mbox{\rm (mod~2)}\}$ for $v\in V\setminus X$, we can apply Theorem~\ref{B-fact} to verify in polynomial time that $X$ does not admit an $X$-parity-factor. By the same theorem a false solution can also be recognized efficiently.
\end{proof}

\begin{prm}
 Is the strong parity property checkable in polynomial time, or is it coNP-complete?
\end{prm}

The following theorem gives a sufficient condition for a graph to have the strong parity property.
 
\begin{thm} \label{thm:strongpar1}
	Let\/ $G$ be a connected graph of minimum degree\/ $\delta(G) \geq 2$. If\/ $G$ contains a connected p-factor\/ $F$ with\/
	$\deg_F(v) < \deg_G(v)$ for every vertex\/ $v$ of\/ $G$, then\/ $G$ has the strong parity property.
\end{thm}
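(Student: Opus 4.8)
The plan is to reduce the statement to a purely parity-theoretic one, solve that with Theorem~\ref{thm:spanning1}, and then use the two hypotheses (the bound $\delta(G)\ge 2$ and the strict inequality $\deg_F(v)<\deg_G(v)$) only in a repair phase that restores the factor condition. Fix an arbitrary even set $X\subseteq V(G)$ and let $p(v)=1$ for $v\in X$ and $p(v)=0$ otherwise. Applying Theorem~\ref{thm:spanning1} to the connected graph $G$ with this set $X$ produces a spanning forest $H_0$ with $\deg_{H_0}(v)\equiv p(v)\pmod{2}$ for every $v$. Thus $H_0$ already satisfies every parity requirement of an $X$-parity-factor; the only way it can fail to be a factor is by having vertices of degree $0$. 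Writing $Z$ for this set of isolated vertices, every $z\in Z$ has $\deg_{H_0}(z)=0$, which is even, so $Z\cap X=\emptyset$; consequently any repair must keep the degree of each $z\in Z$ even while raising it to at least $2$.

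The structural observation that makes the repair possible is that \emph{every vertex of $G$ lies on a cycle}. To see this, recall that an edge lying on no cycle is a bridge, and that every bridge of $G$ must belong to each connected spanning subgraph of $G$; in particular every bridge lies in $F$ (if a bridge $xy$ were absent from $F$, then $F$ would be contained in the disconnected graph $G-xy$ and so could not be connected and spanning). Hence, if some vertex $v$ were incident only to bridges, all of its $\deg_G(v)$ incident edges would lie in $F$, giving $\deg_F(v)=\deg_G(v)$ and contradicting the hypothesis $\deg_F(v)<\deg_G(v)$. Therefore each vertex is incident to a non-bridge edge and so lies on a cycle; the condition $\delta(G)\ge 2$ ensures in addition that every vertex has at least two incident edges in $G$, which is what one needs to be able to raise an isolated vertex to degree $2$.

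The repair itself is driven by the fact that taking the symmetric difference of the current edge set with (the edge set of) a cycle $C$ changes every degree by $0$ or $\pm 2$, and therefore preserves all prescribed parities. I would process the isolated vertices as follows: for each $z\in Z$ choose a cycle $C_z$ through $z$ and replace the current subgraph $H$ by $H\triangle C_z$. Since $z$ is isolated, its two edges on $C_z$ are not in $H$, so after the switch $z$ has degree exactly $2$, while no parity is disturbed. A cleaner way to organize this is to obtain the parity-correct subgraph instead as $H_0=F\setminus J$ for a $T$-join $J\subseteq F$ correcting the parities of $F$; then for each $z\in Z$ all edges incident to $z$ lie outside $H_0$, so one may try to route the repairing cycles through currently \emph{unused} edges only, turning each symmetric difference into a pure edge addition that can never destroy coverage elsewhere.

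The step I expect to be the main obstacle is precisely this control of the repair. A single switch $H\triangle C_z$ can zero out another vertex $y\in C_z$ whenever $\deg_H(y)=2$ and both edges at $y$ lie on $C_z$, so naively fixing one isolated vertex may create a new one. I would resolve this either by an extremal argument---taking, among all spanning subgraphs with the correct parities, one with the fewest isolated vertices, and arguing that a cycle through any remaining isolated vertex (guaranteed by the second paragraph) can be chosen so that the switch strictly decreases this count---or by proving directly that the unused edges $E(G)\setminus E(H_0)$ already contain an even subgraph covering all of $Z$. The latter again reduces to locating cycles through the vertices of $Z$, and this is exactly where the connectivity of the factor $F$ must be combined with the spare edge present at every vertex. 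Verifying that such cycles can always be found without reintroducing isolated vertices is the technical heart of the proof.
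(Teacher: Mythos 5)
Your reduction to the parity statement is fine, and your structural observation (every bridge of $G$ lies in the connected spanning subgraph $F$, so the strict inequality $\deg_F(v)<\deg_G(v)$ forces a non-bridge, hence a cycle, at every vertex) is correct --- but it is also not enough, and the repair phase you build on it is a genuine gap, which you yourself flag as ``the technical heart.'' Neither of your two suggested completions is established: the symmetric difference $H\triangle C_z$ can isolate any degree-$2$ vertex of $H$ whose both $H$-edges lie on $C_z$, and nothing in your argument shows a cycle through $z$ avoiding all such vertices exists, nor that the count of isolated vertices can be made to strictly decrease; likewise it is not true in general that the unused edges $E(G)\setminus E(H_0)$ contain a cycle through each $z\in Z$, so the ``pure addition'' variant can also fail. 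As submitted, the proof stops exactly where the theorem's content lies.

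The decisive idea you are missing --- and you come within a hair of it when you write $H_0=F\setminus J$ for a $T$-join $J\subseteq F$ --- is to \emph{complement in $G$, not in $F$}, with the parity target shifted by the degrees of $G$. Concretely (this is the paper's proof): set $X'=\{v\in V(G):\deg_G(v)\not\equiv p(v)\pmod{2}\}$, which has even size since $\sum_v \deg_G(v)$ is even and $|X|$ is even; apply Theorem~\ref{thm:spanning1} to the \emph{connected} graph $F$ with the set $X'$ to get a spanning forest $K\subseteq F$ with $\deg_K(v)\equiv \deg_G(v)+p(v)\pmod{2}$; and output $H=G-E(K)$. Then $\deg_H(v)=\deg_G(v)-\deg_K(v)\equiv p(v)\pmod{2}$ for every $v$, and $\deg_H(v)\geq \deg_G(v)-\deg_F(v)\geq 1$ by the strict inequality, since $K\subseteq F$. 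So no isolated vertex can ever arise and the entire repair phase --- cycles, extremal counts, unused-edge routing --- evaporates; the hypothesis $\deg_F(v)<\deg_G(v)$ is used precisely to guarantee positive degrees in the complement, not to supply cycles. Your orientation, taking the parity-correct subgraph as a small subgraph ($H_0\subseteq F$ or a minimum-size forest in $G$) and then trying to grow it, is what creates the unsolved difficulty; taking it as the complement of a small subgraph of $F$ solves the problem in one step.
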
 

Before a proof of this theorem we introduce the concept of binary factor.  A sequence, whose elements are from the set $\{0, 1\}$ is called a \emph{binary sequence}.
 Let $G$ be a connected graph with vertex set $V(G) = \{v_1, \dots, v_n\}$ and degree sequence $D = \{d_1, \dots, d_n\}$, $d_i = \deg_G(v_i)$. The \emph{binary degree sequence} of $G$ is the binary sequence $A = \{a_1, \dots, a_n\}$, where
$a_i \equiv d_i \pmod{2}$. Clearly, the number of ones in $A$ is always even.

Let $B = \{b_1, \dots, b_n\}$ be a binary sequence with an even number of ones. A \emph{binary-factor} of G with respect to $B$ (or, equivalently, a \emph{$B$-factor}) is a p-factor $F$ of $G$, whose binary degree sequence is $B$.

\begin{lem}
Let\/ $G$ be a connected graph with vertex set\/ $V(G) = \{v_1, \dots, v_n\}$, with degree sequence\/ $\{d_1, \dots, d_n\}$, and with\/ $\delta(G) \geq 2$. Suppose further that\/ $G$ has a connected p-factor\/ $H$ with\/ $1 \leq \deg_H(v_i) < \deg_G(v_i)$ for all\/ $1\leq i\leq n$. Then, for every binary sequence\/ $B = \{b_1, \dots, b_n\}$ with an even number of ones, $G$ has a\/ $B$-factor\/ $F$. 
%; let\/ $H$ be a connected factor of\/ $G$ with\/
%$1 \leq \deg_H(v_i) < \deg_G(v_i)$ for all\/ $1\leq i\leq n$, and let\/ $B = \{b_1, \dots, b_n\}$ be a binary sequence with even number of ones. Then\/ $G$ has a\/ $B$-factor\/ $F$.
\end{lem}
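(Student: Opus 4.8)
The plan is to separate the two requirements on the desired $B$-factor $F$ — the prescribed degree parities and the positivity $\delta(F)\ge 1$ — obtaining the parities from Theorem~\ref{thm:spanning1} and using the connected factor $H$ as a skeleton that guards against isolated vertices. Write $a_i=\deg_H(v_i)\bmod 2$ and set $X=\{v_i : a_i\neq b_i\}$. Since the number of odd degrees of $H$ is even and, by hypothesis, the number of ones in $B$ is even, the symmetric difference $X$ has even size. Applying Theorem~\ref{thm:spanning1} to the connected graph $G$ with this set $X$ yields a spanning forest $J$ with $\deg_J(v)$ odd exactly for $v\in X$. I then set $F=H\,\triangle\,J$ (symmetric difference of edge sets). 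For every vertex, $\deg_F(v_i)\equiv \deg_H(v_i)+\deg_J(v_i)\equiv b_i\pmod 2$, so the binary degree sequence of $F$ is exactly $B$. Thus $F$ is a $B$-factor as soon as I can guarantee $\delta(F)\ge 1$.

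The only thing that can fail is the appearance of isolated vertices, and these are easy to characterize: $v$ is isolated in $F=H\triangle J$ precisely when the set of $H$-edges at $v$ coincides with the set of $J$-edges at $v$; in particular $\deg_J(v)=\deg_H(v)$ and, since $\deg_F(v)=0$ is even, $b_v=0$. The crucial structural observation is that such a vertex always lies on a cycle of $G$. Indeed, because $H$ is a connected spanning subgraph, it must contain every bridge of $G$; hence the hypothesis $\deg_H(v)<\deg_G(v)$ forces at least one edge incident to $v$ to be a non-bridge, and a non-bridge edge lies on a cycle $C$ of $G$ passing through $v$. The hypothesis also provides an edge $f=vw\in E(G)\setminus E(H)$, which (as the $J$-edges at $v$ are exactly the $H$-edges at $v$) satisfies $f\notin E(F)$, so $v$ genuinely has spare incidences in $G\setminus E(F)$.

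With these facts in hand I would repair isolated vertices by cycle switching. Symmetric-differencing $F$ with a cycle $C$ leaves every degree parity unchanged (a cycle is an even subgraph), so $F\triangle C$ still has binary degree sequence $B$; and if $C$ passes through an isolated $v$, then $\deg_{F\triangle C}(v)=0+2=2$, so $v$ ceases to be isolated. To turn this into a proof I would take, among all spanning subgraphs of $G$ with binary degree sequence $B$, one minimizing the number of isolated vertices, and derive a contradiction from the existence of an isolated vertex $v$ by producing a cycle through $v$ whose switch strictly lowers that count. Here lies the main obstacle: switching along $C$ can \emph{create} a new isolated vertex, namely a vertex $u\in C$ of degree $2$ both of whose edges lie in $C\cap E(F)$. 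I would avoid this by routing $C$ inside the nontrivial $2$-edge-connected block containing the non-bridge edge at $v$ and choosing it to meet $E(F)$ as little as possible (ideally a cycle through $v$ in $G\setminus E(F)$, which incurs no deletions and hence isolates nothing). Controlling these accidental deletions, so that each repair makes genuine progress, is the technical heart of the argument.
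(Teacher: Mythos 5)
Your first half is sound: taking $X$ to be the set where the parities of $H$ and the target sequence $B$ disagree, invoking Theorem~\ref{thm:spanning1}, and forming $F=H\,\triangle\,J$ does produce a spanning subgraph with binary degree sequence exactly $B$. But the second half has a genuine gap, and you name it yourself: the repair of isolated vertices by cycle switching is never actually carried out. Nothing in your sketch guarantees a cycle through an isolated vertex $v$ whose switch strictly decreases the number of isolated vertices --- a cycle through $v$ avoiding $E(F)$ need not exist, and a minimality argument (fewest isolated vertices, then secondary tie-breaking) is not obviously monotone under your switches, since a switch can simultaneously fix $v$ and isolate a degree-$2$ vertex $u$ elsewhere on the cycle. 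Moreover, your repair uses the hypothesis on $H$ only locally (to find one spare non-bridge edge at $v$), while the difficulty is global; note that a purely local repair scheme cannot succeed in general, because there are graphs with $\delta(G)\ge 2$ that admit parity-correct spanning subgraphs for every admissible $X$ yet fail the strong parity property (the obstructions in Section~\ref{sec:4}), so any completion must exploit the connected factor $H$ in an essential, global way --- which your sketch does not do.

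The paper's proof sidesteps the repair problem entirely by a complementation trick that uses the strict inequality $\deg_H(v_i)<\deg_G(v_i)$ exactly where it is needed. Instead of applying Theorem~\ref{thm:spanning1} to $G$ and adding the forest to $H$, apply it \emph{inside} $H$ (legitimate, since $H$ is connected --- this is precisely why the lemma demands a connected factor): with $a_i=\deg_G(v_i)\bmod 2$ and $c_i=(a_i+b_i)\bmod 2$, let $X=\{v_i\mid c_i=1\}$ and obtain a spanning forest $K\subseteq H$ whose degree parities are given by $C$. Then set $F=G-E(K)$. The parities come out right, $\deg_F(v_i)\equiv a_i+c_i\equiv b_i \pmod 2$, and positivity is automatic with no repair step: $\deg_K(v_i)\le\deg_H(v_i)<\deg_G(v_i)$, so every vertex keeps at least one edge of $G$ in $F$. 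In short, your construction $F=H\triangle J$ with $J$ spanning $G$ leaves you fighting isolated vertices, whereas running the parity forest inside $H$ and complementing within $G$ makes $\delta(F)\ge 1$ a one-line consequence of the hypothesis you under-used.
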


\begin{proof}
Determine first the binary degree sequence $A = \{a_1, \dots, a_n\}$ of $G$. Next, compute the binary sequence $C = \{c_1, \dots, c_n\}$ with $c_i \equiv (a_i + b_i) \pmod{2}$ and define the set 
$X = \{v_i \mid  c_i = 1, i= 1, \dots, n\}$. It is easy to see that $X$ has an even number of elements. 
Now we apply Theorem~\ref{thm:spanning1} on the graph $H$ with the set $X$. The result is a spanning forest $K$ of $H$ with the binary sequence $C$. Then the required $B$-factor $F$ of $G$ is obtained by removing all edges of $K$ from the graph $G$. 
Here the conditions $K\subseteq H$ and $\deg_H(v_i) < \deg_G(v_i)$ guarantee that every vertex has a positive degree in $G-E(K)$.
\end{proof}

Now the proof of Theorem~\ref{thm:strongpar1} immediately follows from the lemma.
Below we give some classes of graphs for which the existence of a connected p-factor described in Theorem~\ref{thm:strongpar1} can be proved.

\begin{thm} \label{thm:delta-4}
If\/ $G$ is a\/ $2$-edge-connected graph with\/ $\delta(G) \geq 4$, then\/ $G$ has the strong parity property.
\end{thm}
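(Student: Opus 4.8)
The plan is to reduce the statement entirely to Theorem~\ref{thm:strongpar1}. That theorem says a connected graph with $\delta(G)\ge 2$ has the strong parity property as soon as it possesses a connected factor $F$ with $\deg_F(v)<\deg_G(v)$ at every vertex. Since $\delta(G)\ge 4$ forces $\delta(G)\ge 2$ and (together with 2-edge-connectivity) guarantees $|V(G)|\ge 2$, the only real task is to exhibit one such connected factor. I would produce it directly, rather than building it by hand.

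The tool that supplies exactly the right object is Thomassen's spanning-tree result, Theorem~\ref{thm:lowtree1}. Because $G$ is 2-edge-connected, I would apply it to obtain a spanning tree $T$ with $\deg_T(v)\le \frac{\deg_G(v)+3}{2}$ for every vertex $v$. A spanning tree is connected and spanning, and on at least two vertices it has minimum degree at least $1$, so $T$ is a connected factor of $G$. It therefore remains only to check that the degree bound it carries is strict relative to $\deg_G$.

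This is the decisive computation, and it is where the hypothesis $\delta(G)\ge 4$ is used in full: for any vertex $v$ one has $\deg_G(v)\ge 4$, and the inequality $\frac{\deg_G(v)+3}{2}<\deg_G(v)$ is equivalent to $\deg_G(v)>3$. Hence $\deg_T(v)\le \frac{\deg_G(v)+3}{2}<\deg_G(v)$ for every $v$, so $T$ is precisely the connected factor required by Theorem~\ref{thm:strongpar1}, and that theorem finishes the proof.

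I do not expect a genuine obstacle here; the argument is an assembly of two previously stated results. The one point worth flagging is the sharpness of the constant: Thomassen's bound has an additive $3$, so $\delta(G)\ge 4$ is the smallest minimum-degree threshold for which the inequality becomes strict (at $\delta(G)=3$ one only gets $\deg_T(v)\le 3=\deg_G(v)$, which fails the strict requirement). Recognizing that the $+3$ in the bound meshes exactly with $\delta\ge 4$ is really the whole idea.
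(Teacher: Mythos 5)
Your proof is correct and follows exactly the paper's route: the paper likewise proves Theorem~\ref{thm:delta-4} by feeding the Thomassen spanning tree of Theorem~\ref{thm:lowtree1} into Theorem~\ref{thm:strongpar1}, leaving the strict inequality $\frac{\deg_G(v)+3}{2}<\deg_G(v)$ for $\deg_G(v)\ge 4$ implicit. Your explicit verification of that inequality, and the observation that $\delta\ge 4$ is exactly the threshold where the $+3$ in Thomassen's bound still yields strictness, simply spells out what the paper's one-line proof leaves to the reader.
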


\begin{proof}
We apply Theorem~\ref{thm:strongpar1} with $F$ being a spanning tree $T$ of $G$ as guaranteed by Theorem~\ref{thm:lowtree1}.
\end{proof}

\begin{thm} \label{thm:Ham}
If a graph\/ $G$ has a Hamiltonian path and\/ $\delta(G) \geq 3$, then it has the strong parity property.
\end{thm}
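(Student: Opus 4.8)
The plan is to reduce this to Theorem~\ref{thm:strongpar1} by exhibiting a suitable connected factor, and the natural candidate is the Hamiltonian path itself. Let $P = v_1 v_2 \cdots v_n$ be a Hamiltonian path of $G$. Since $P$ visits every vertex of $G$, it is a spanning subgraph, and being a path it is connected with minimum degree $1$ (attained exactly at the two endpoints $v_1$ and $v_n$); hence $P$ is a connected factor of $G$. Because $G$ possesses a Hamiltonian path, $G$ is connected, and by hypothesis $\delta(G)\geq 3 \geq 2$, so the structural preconditions of Theorem~\ref{thm:strongpar1} are met.

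The only remaining point I would check is the strict degree inequality $\deg_P(v) < \deg_G(v)$ for every vertex $v$. Each internal vertex $v_i$ (with $2 \leq i \leq n-1$) satisfies $\deg_P(v_i) = 2$, and since $\delta(G)\geq 3$ we get $2 < 3 \leq \deg_G(v_i)$. Each endpoint satisfies $\deg_P(v) = 1 < 3 \leq \deg_G(v)$. Thus $\deg_P(v) < \deg_G(v)$ holds at \emph{every} vertex, and Theorem~\ref{thm:strongpar1} applied with $F = P$ immediately yields that $G$ has the strong parity property.

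There is essentially no serious obstacle here; the whole content is the observation that $\delta(G) \geq 3$ is precisely the threshold that forces the Hamiltonian path's degrees to be strictly below the ambient degrees. The only place where the strict inequality could fail is at an internal vertex, where $\deg_P = 2$, so the hypothesis $\delta(G)\geq 3$ (rather than merely $\geq 2$) is exactly what is needed. I would therefore present the argument as a one-line application of Theorem~\ref{thm:strongpar1}, with the degree bookkeeping at internal vertices versus endpoints spelled out as above.
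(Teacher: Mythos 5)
Your proof is correct and follows exactly the paper's argument: the paper also proves Theorem~\ref{thm:Ham} by applying Theorem~\ref{thm:strongpar1} with $F$ taken to be the Hamiltonian path. Your explicit verification of the strict inequality $\deg_P(v) < \deg_G(v)$ at internal vertices versus endpoints simply spells out the degree bookkeeping the paper leaves implicit.
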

\begin{proof}
We apply Theorem~\ref{thm:strongpar1} with $F$ being a Hamiltonian path of $G$.
\end{proof}

%\begin{thm}
%If $G$ is 2-connected $K_{1,3}$-free graph with $\delta(G) \geq 4$, then it has the strong parity property.
%\end{thm}
%\begin{proof}
%We apply Theorem 3 with $H$ being a connected [2,3]-factor of $G$. Its existence is proved by Li and Liu \cite{LiL-1998}.
%\end{proof}

%\begin{thm}
%If $G$ is a $3$-connected planar graph with $\delta(G) \geq 4$, then it has the strong parity property.
%\end{thm}
%\begin{proof}
%We apply Theorem~\ref{thm:strongpar1} with $F$ being a spanning tree $T$ of maximum degree three of $G$. The existence of such spanning tree is proved by Barnette in \cite{Bar-1966}.
%\end{proof}

\begin{thm}
	If every vertex of a connected graph\/ $G$ is incident with a\/ $2$-cycle or with a\/ $3$-cycle, then\/ $G$ has the strong parity property.
\end{thm}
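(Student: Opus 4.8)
The plan is to fix an arbitrary subset $X\subseteq V(G)$ of even size and build an $X$-parity-factor directly, rather than through Theorem~\ref{thm:strongpar1}; indeed the latter cannot be invoked in general, since for instance a single triangle satisfies the hypothesis but admits no connected factor $F$ with $\deg_F(v)<\deg_G(v)$ at every vertex (all degrees would have to equal $1$, which is impossible on three vertices). First I would apply Meigu Guan's result (Theorem~\ref{thm:spanning1}) to $G$ and $X$, obtaining a spanning forest $H$ with $\deg_H(v)\equiv 1\pmod 2$ for $v\in X$ and $\deg_H(v)\equiv 0\pmod 2$ for $v\notin X$. Such an $H$ already has all the required parities; the only way it can fail to be a factor is by having isolated vertices, and every such vertex lies outside $X$ (its degree $0$ is even). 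The whole task therefore reduces to removing isolated vertices while keeping every degree parity intact.

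The key tool is the observation that taking the symmetric difference of $H$ with the edge set of any cycle $C$ of $G$ preserves the parity of $\deg_H(u)$ at every vertex $u$: each vertex of $C$ is incident with exactly two edges of $C$, so toggling the membership of these two edges changes $\deg_H(u)$ by an even number (namely $-2$, $0$, or $+2$), while vertices off $C$ are untouched. Hence I may repeatedly replace $H$ by $H\triangle E(C)$ for cycles $C$ of $G$ without ever disturbing the parity conditions, and every intermediate graph remains a spanning subgraph of $G$.

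Now I would patch isolated vertices one at a time. While some isolated vertex $v$ remains, the hypothesis supplies a $2$-cycle or $3$-cycle $C_v$ of $G$ through $v$; replace $H$ by $H\triangle E(C_v)$. Since $v$ was isolated, the two edges of $C_v$ at $v$ were absent and are now present, so $v$ acquires degree $2$. The essential point to verify is that no \emph{new} isolated vertex is created. If $C_v$ is a $2$-cycle on $\{v,u\}$, both its parallel edges were absent, so $\deg_H(u)$ only increases. If $C_v$ is a triangle $vab$, then after the toggle each of $a$ and $b$ is incident with the freshly added edge $va$ or $vb$, hence has positive degree, irrespective of what happened to the edge $ab$. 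Thus the number of isolated vertices strictly decreases at each step, the process terminates, and the final spanning subgraph $F$ satisfies $\delta(F)\ge 1$ together with the prescribed parities; that is, $F$ is an $X$-parity-factor. As $X$ was arbitrary, $G$ has the strong parity property.

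The main obstacle is precisely the verification in the patching step that toggling a small cycle never isolates another vertex — in particular the triangle case, where the shared edge $ab$ may be switched off. The point that the newly inserted edges $va,vb$ nevertheless keep $a$ and $b$ covered is exactly what guarantees the monotone decrease of the isolated-vertex count and hence the termination of the procedure.
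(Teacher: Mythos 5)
Your proof is correct, but it takes a genuinely different route from the paper's. You apply Theorem~\ref{thm:spanning1} directly to $X$ and then repair the isolated vertices of the resulting spanning forest one at a time, replacing $H$ by $H\triangle E(C_v)$ for the prescribed $2$- or $3$-cycle $C_v$ through the isolated vertex $v$; you correctly observe that such a toggle preserves every degree parity, gives $v$ degree $2$, and --- the key point you rightly single out --- never isolates another vertex, since in the triangle case the freshly inserted edges $va,vb$ keep $a$ and $b$ covered even if $ab$ is switched off, so the isolated-vertex count strictly decreases and the process terminates in an $X$-parity-factor. The paper instead works in the complement: it applies Theorem~\ref{thm:spanning1} to the parity-shifted set $Y$ consisting of the vertices whose degree parity in $G$ differs from the parity prescribed by $X$, chooses among all spanning subgraphs $H$ with the $Y$-parities one of \emph{minimum size}, and uses the same small cycles extremally rather than constructively: minimality excludes parallel edges in $H$, and if $\deg_H(v)=\deg_G(v)$ held at some vertex, a triangle toggle at $v$ would strictly shrink $H$, a contradiction; hence $\deg_H(v)<\deg_G(v)$ everywhere and $F=G-E(H)$ is the desired factor. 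Your iterative repair is more elementary and explicitly algorithmic (one call to Theorem~\ref{thm:spanning1} followed by at most $n$ toggles), whereas the paper's extremal complementation mirrors the scheme of Theorem~\ref{thm:strongpar1}, in effect manufacturing the strict-degree subgraph that the lemma there requires. Your opening remark is also apt: Theorem~\ref{thm:strongpar1} genuinely cannot be invoked here (e.g.\ $K_3$ satisfies the hypothesis but admits no connected factor with $\deg_F(v)<\deg_G(v)$ at every vertex), which is precisely why the paper gives this theorem a separate proof.
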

\begin{proof}
	We start with the same line as in the proof of Theorem~\ref{thm:strongpar1}. Let $v_1, \dots , v_n$ be the vertices of $G$ and let $A = (a_1, \dots, a_n)$ be the binary degree sequence of $G$. For a subset $X\subseteq V(G)$ of even cardinality, first define the binary sequence $B=(b_1, \dots , b_n)$ where $b_i = 1$ if and only if $v_i \in X$. Then, consider the binary sequence 
$C = (c_1, \dots, c_n)$ with $c_i \equiv (a_i + b_i) \pmod{2}$ and take the set $Y = \{v_i \mid c_i = 1, i= 1, \dots, n\}$. 
	
For the graph $G$ and for the set $Y$, we consider a spanning subgraph $H$ which satisfies the parity conditions and has the smallest size $|E(H)|$ under this assumption. By Theorem~\ref{thm:spanning1}, there exists such a spanning subgraph $H$. We will prove that $\deg_H(v) < \deg_G(v)$ holds for every $v \in V(G)$.  First observe that, by the minimality assumption, $H$ does not contain parallel edges. Now, assume that there is a vertex $v$ such that $\deg_H(v) = \deg_G(v)$. This vertex cannot be incident with parallel edges in $G$ and hence, there is a triangle $uvu'$ in $G$.  Since $\deg_H(v) = \deg_G(v)$, both edges $uv$ and $u'v$ belong to $H$. If $uu' \in E(H)$, consider the spanning subgraph $H'$ with $E(H')=E(H) \setminus \{uv, u'v, uu'\}$; if $uu' \notin E(H)$, consider $H'$ with $E(H')=(E(H) \cup \{uu'\}) \setminus \{uv, u'v\}$. In either case, $H'$ satisfies the parity conditions and has strictly smaller size than $H$. This contradiction proves that $\deg_H(v) < \deg_G(v)$ for every $v\in V(G)$.	
	
Define the spanning subgraph $F$ of $G$ with $E(F)=E(G)\setminus E(H)$ and observe that $B$ is the binary sequence of $F$. Moreover, for every vertex $v$, $\deg_H(v) < \deg_G(v)$ implies $\deg_F(v) \ge 1$. Thus, $F$ is an $X$-parity factor of $G$.
\end{proof}

From this theorem we immediately have that all connected claw-free graphs with minimum degree at least $3$ have the strong parity property. In a more general form, we conclude the following.
\begin{cor}
If\/ $G$ is a connected\/ $K_{1, r}$-free graph with\/ $\delta(G) \geq r \geq 3$, then\/ $G$ has the strong parity property.
\end{cor}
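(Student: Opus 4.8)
The plan is to derive this corollary directly from the preceding theorem. That theorem guarantees the strong parity property whenever every vertex is incident with a $2$-cycle or a $3$-cycle, so it suffices to show that the hypotheses $K_{1,r}$-free and $\delta(G)\geq r\geq 3$ force exactly this local structure at every vertex.

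First I would fix an arbitrary vertex $v$ and dispose of any genuine multigraph features. If $v$ is incident with a pair of parallel edges, then $v$ already lies on a $2$-cycle and there is nothing to prove. So I may assume that the edges at $v$ go to $\deg_G(v)\geq r$ distinct neighbors; in particular the neighborhood $N(v)$ contains at least $r$ vertices.

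The key step is a short independence/pigeonhole observation. Pick any $r$ neighbors $u_1,\dots,u_r$ of $v$. If these were pairwise non-adjacent, then together with $v$ they would induce a copy of $K_{1,r}$, contradicting the assumption that $G$ is $K_{1,r}$-free. Hence some pair $u_iu_j$ is an edge, and $vu_iu_j$ is a triangle, i.e.\ a $3$-cycle through $v$. Since $v$ was arbitrary, every vertex of $G$ is incident with a $2$-cycle or a $3$-cycle, and the previous theorem applies; the case $r=3$ recovers the fact, noted just above the corollary, that connected claw-free graphs with $\delta(G)\geq 3$ enjoy the strong parity property.

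I expect the only point requiring care to be the first, reductive step: making precise what ``$K_{1,r}$-free'' should mean in the presence of parallel edges or loops, so that ``at least $r$ distinct neighbors'' is genuinely justified before the induced-star argument is invoked. Once the structure around $v$ is reduced to its simple part, the contradiction with $K_{1,r}$-freeness is immediate, and no further estimates are needed.
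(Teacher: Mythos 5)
Your argument is correct and is precisely the proof the paper leaves implicit: the corollary is stated right after the theorem on $2$-cycles and $3$-cycles, and the intended justification is exactly your pigeonhole step that $\delta(G)\geq r$ plus $K_{1,r}$-freeness forces a pair of adjacent neighbors, hence a triangle (or a parallel pair, hence a $2$-cycle) at every vertex. The multigraph caveat you flag (e.g.\ a loop at $v$ inflating $\deg_G(v)$ so that ``at least $r$ distinct neighbors'' needs justification) is real but is equally glossed over by the paper, which offers no more detail than ``immediately''.
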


We think that the following strengthening of Theorems~\ref{thm:delta-4} and \ref{thm:Ham} is also true.
\begin{conj}
Every\/ $2$-edge-connected graph of minimum degree at least three has the strong parity property.
\end{conj}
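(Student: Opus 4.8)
The plan is to argue directly and not through the sufficient condition of Theorem~\ref{thm:strongpar1}, which provably cannot settle the conjecture in full: in a $3$-regular graph a connected factor $F$ with $\deg_F(v)<\deg_G(v)$ would be a connected spanning subgraph of maximum degree at most $2$, i.e.\ a Hamiltonian path or cycle, yet there exist $2$-edge-connected cubic graphs that are neither traceable nor Hamiltonian. So I would instead follow the minimum-size parity-subgraph template used above (cf.\ the proof via Theorem~\ref{thm:spanning1} and the theorem on vertices lying on a $2$- or $3$-cycle). Fix an even set $X\subseteq V(G)$ and put $Y=\{v:\deg_G(v)\ \text{odd}\}\,\triangle\,X$; both sets in this symmetric difference are even, so $|Y|$ is even. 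Apply Theorem~\ref{thm:spanning1} to $G$ with the set $Y$ and take a spanning forest $H$ realizing the $Y$-parities of minimum size, so that $|C\cap E(H)|\le|C|/2$ for every cycle $C$ of $G$. Then $F:=G-E(H)$ satisfies $\deg_F(v)\equiv\deg_G(v)+[v\in X]\pmod 2$, which is exactly the required $X$-parity; the only thing that can fail is $\delta(F)\ge 1$. Hence it suffices to show that such a minimum $H$ can be chosen with no \emph{saturated} vertex, i.e.\ with $\deg_H(v)<\deg_G(v)$ for all $v$, because $\deg_F(v)=\deg_G(v)-\deg_H(v)$.

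For the extremal step, among all minimum-size forests realizing the $Y$-parities choose one minimizing the number of saturated vertices, and suppose $v$ is saturated. Then $\deg_G(v)\ge 3$ and every edge at $v$ lies in $E(H)$. Using $2$-edge-connectivity, pick a cycle $C$ through $v$; its two edges at $v$ are in $E(H)$. Passing to $H\triangle C$ preserves every degree parity, and to keep minimum size one needs a \emph{tight} cycle, $|C\cap E(H)|=|C\setminus E(H)|$. For such a $C$ the new degree of each $w\in C$ is $\deg_H(w)+2-2h_w$, where $h_w\in\{0,1,2\}$ counts the $C$-edges at $w$ belonging to $E(H)$; since $h_v=2$ the vertex $v$ ceases to be saturated, while every $w$ with $h_w\ge 1$ keeps $\deg\le\deg_G$. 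This would contradict the minimality of the number of saturated vertices, and the conjecture would follow.

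Two points must be secured, and together they form the main obstacle. First, one must produce a tight cycle through the saturated vertex: minimality only yields $|C\cap E(H)|\le|C\setminus E(H)|$ for \emph{every} cycle, and it is precisely here that $2$-edge-connectivity must be used quantitatively --- e.g.\ via an ear decomposition arranged so that $v$ sits on a short ear, or via a shortest-cycle argument --- to force equality rather than strict inequality. Second, even for a tight $C$ the vertices $w$ with $h_w=0$ have their degree raised by $2$, so a vertex of slack $\deg_G(w)-\deg_H(w)=2$ may become newly saturated; the naive potentials (number of saturated vertices, or $\sum_w 2^{-(\deg_G(w)-\deg_H(w))}$) balance these gains and losses \emph{exactly}, so a finer global invariant, or a careful choice of $C$ avoiding slack-$2$ vertices on its non-$H$ edges, is required. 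The chordlessness of the $H$-components forced by minimality --- in the simple case no non-forest edge joins two vertices of the same component --- should help here, since it pushes the crossing (non-$H$) edges of any cycle between components and lets one route $C$ through $v$ along long $H$-arcs with few crossings. I would also keep in reserve a reduction to the already-established $3$-regular case by \emph{inflation} (replace each vertex of degree $\ge 4$ by a cycle, attaching the original edges to distinct cycle vertices, and choose the pulled-back set $X'$ to put one cycle-vertex of a blob in $X'$ exactly when the original vertex lies in $X$, so that the blob's external-edge parity matches); there the obstacle shifts to the \emph{factor} condition, as a cubic parity-factor may satisfy a blob through its cycle edges alone and isolate the original vertex, which forces a redesign of the gadget to compel at least one external edge per blob while preserving $2$-edge-connectivity and all parities.
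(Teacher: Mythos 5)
First, a point of orientation: the statement you are proving is labelled a \emph{conjecture} in the paper, and the paper does not prove it. What the paper establishes is only the $3$-regular case (Theorem~\ref{cubic}), and by a genuinely different route: Petersen's theorem gives a perfect matching $M$, the $2$-factor $G-M$ decomposes into cycles $H_1,\dots,H_k$, a set of $k-1$ matching edges makes $\bigcup E(H_i)\cup F$ a connected spanning subgraph, and an induction on $k$ produces vertex-disjoint paths whose endpoint set is exactly $Z:=V(G)\setminus X$ and whose internal vertices lie in $X$; deleting the edges of these paths from $G$ leaves every vertex with degree $1$, $2$, or $3$ of the correct parity. Your proposal does not prove the conjecture either, and to your credit you say so explicitly: the two steps you defer are not technicalities but precisely the open difficulty. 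Your preliminary observations are sound --- the parity bookkeeping with $Y=\{v:\deg_G(v)\ \mathrm{odd}\}\,\triangle\,X$ is correct (the intermediate congruence you display is garbled, but the conclusion $\deg_F(v)\equiv[v\in X]\pmod 2$ is right), and your argument that Theorem~\ref{thm:strongpar1} cannot settle the cubic case matches the paper's own remark about non-traceable cubic graphs.

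The two gaps are genuine, and here is why each fails as it stands. (a) Minimality of $H$ (via Theorem~\ref{thm:spanning1} and the switching argument) yields only $|C\cap E(H)|\le |C|/2$ for every cycle $C$, i.e.\ the swap $H\,\triangle\,C$ never \emph{decreases} the size; to stay within the minimum-size class you need a tight cycle through the saturated vertex $v$, and nothing in $2$-edge-connectivity produces one --- if every cycle through $v$ is strictly slack, no admissible move exists at all. Contrast this with the paper's theorem on vertices incident with a $2$- or $3$-cycle: there the digon or triangle at $v$ makes the swap \emph{strictly} decrease $|E(H)|$, directly contradicting minimality; that strict leverage is exactly what your setting lacks, so your template reproves that theorem but does not extend past it. (b) Even granting a tight $C$, a vertex $w$ on $C$ with $h_w=0$ and slack $\deg_G(w)-\deg_H(w)=2$ becomes newly saturated, and as you note the natural potentials balance exactly; you name the need for ``a finer global invariant'' without supplying one, so the extremal induction does not terminate. (c) The inflation fallback fails for the reason you identify --- a blob can satisfy its parities internally, isolating the original vertex --- and the repair is hard for a structural reason: an $X'$-parity-factor constrains only parities, so no choice of $X'$ alone can \emph{force} an external edge at a blob whose prescribed external parity is even; compelling positive external degree is the same unsolved condition you started with, now hidden in the gadget. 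In short, this is a plausible program with two honest open steps, not a proof; the paper leaves the general statement open and handles only the cubic case, where the $2$-factor-plus-path-deletion argument of Theorem~\ref{cubic} sidesteps saturation entirely because the deleted path system has maximum degree $2$ in a $3$-regular graph.
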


  To prove the conjecture for a graph $G$, it would be enough to find a p-factor
 $F\subset G$ mentioned in Theorem~\ref{thm:strongpar1}.
 However, the condition $\delta(G)\geq 3$ is not strong enough to ensure the
  existence of such a factor.
  % does not always exist.
A general counterexample is the class of $3$-regular graphs having no Hamiltonian path.
Indeed, in those graphs any spanning tree contains a vertex of degree three because
 the graphs of maximum degree less than~3 are disjoint unions of paths and cycles.
%I am trying to prove find such a factor, but up to now with no success. PLEASE TRY!! 
On the other hand, for 3-regular graphs we can prove the conjecture, even in a
 slightly stronger form.

\begin{thm} \label{cubic}
    If\/ $G$ is a connected 3-regular graph such that the cut-edges of\/ $G$
 are contained in a path, then\/ $G$ has the strong parity property.
\end{thm}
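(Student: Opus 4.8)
The plan is to verify the hypothesis of Theorem~\ref{thm:strongpar1} by constructing, for any connected $3$-regular graph $G$ whose cut-edges lie on a single path, a connected factor $F\subset G$ with $\deg_F(v)<\deg_G(v)=3$ for every vertex, i.e.\ a spanning tree in which every vertex has degree at most $2$ — equivalently, a Hamiltonian path. Since a $3$-regular graph without a Hamiltonian path is exactly the obstruction identified just before the statement, the whole content is to show that the cut-edge condition forces such a path to exist. I would therefore aim to prove that every connected cubic graph whose cut-edges are contained in a single path is traceable.

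First I would dispose of the $2$-edge-connected (bridgeless) case: here there are no cut-edges, and I would invoke a classical traceability result for bridgeless cubic graphs, or more safely build the Hamiltonian path directly from a suitable structure (for instance, bridgeless cubic graphs have a perfect matching by Petersen's theorem, and the complementary $2$-factor together with the matching can be steered into a spanning path). Next I would set up the general case by examining the bridges. Let the cut-edges be $e_1,\dots,e_k$, all lying on one path $P$; deleting them splits $G$ into $2$-edge-connected ``blocks'' $G_0,G_1,\dots,G_k$ arranged linearly along $P$, each attached to its neighbours through exactly one bridge. Because $G$ is cubic, the endpoint of a bridge inside a block has only two further edges into that block, and I would analyze the degree sequence of each block after the bridge-endpoints are marked.

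The main work, which I expect to be the central obstacle, is to show that each block $G_i$ admits a Hamiltonian path whose \emph{endpoints are exactly the bridge-attachment vertices} (one endpoint for the two terminal blocks $G_0,G_k$, two endpoints for each internal block), so that these block-paths can be concatenated through the bridges $e_1,\dots,e_k$ into a single spanning path of $G$. This is a prescribed-endpoints traceability statement on a $2$-edge-connected near-cubic block, and it is where the ``all bridges on one path'' hypothesis is genuinely used: that hypothesis is precisely what guarantees each block meets the bridge system in at most two vertices and that these attachment points sit in the right linear order. I would prove the block statement by an induction or by a direct argument exploiting $2$-edge-connectivity (e.g.\ an ear-decomposition or a $2$-factor argument that can be rerouted to hit the two prescribed terminals), taking care of the small-block base cases separately.

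Finally, concatenating the block Hamiltonian paths along the bridges yields a Hamiltonian path of $G$, hence a connected factor $F$ with $\deg_F(v)\le 2<3=\deg_G(v)$ for all $v$. Applying Theorem~\ref{thm:strongpar1} then gives the strong parity property, completing the proof. The delicate point to watch is the parity/degree bookkeeping at bridge endpoints inside each block — ensuring the prescribed terminals are compatible with a spanning path — and the handling of degenerate blocks (single vertices or multigraph blocks with parallel edges), which must be checked by hand.
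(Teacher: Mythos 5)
Your plan cannot work: it rests on the claim that every connected cubic graph whose cut-edges lie on one path is traceable, and that claim is false already in your ``base case.'' There is no classical traceability result for bridgeless cubic graphs; there exist $2$-edge-connected cubic graphs with no Hamiltonian path. Concretely, take six disjoint copies of $K_4$ minus an edge, label them by the six pairs $\{x,y\}\subseteq\{a,b,c,d\}$, add four new vertices $a,b,c,d$, and join each new vertex $x$ to one degree-two vertex in each of the three copies whose label contains $x$, so that every copy receives exactly two such edges. The resulting graph is cubic, simple and bridgeless, but deleting $\{a,b,c,d\}$ leaves six components, while deleting four vertices from a path leaves at most five pieces --- so it has no Hamiltonian path. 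The cut-edge hypothesis of Theorem~\ref{cubic} holds vacuously for this graph, so the theorem asserts it has the strong parity property, yet your argument says nothing about it. The paper flags exactly this issue in the paragraph before the theorem: for a cubic $G$, a connected factor $F$ with $\deg_F(v)<\deg_G(v)=3$ for all $v$ is a connected spanning subgraph of maximum degree at most $2$, i.e.\ it exists if and only if $G$ is traceable; hence non-traceable cubic graphs are counterexamples to the \emph{applicability of Theorem~\ref{thm:strongpar1}}, not to the strong parity property. You misread that remark as identifying the obstruction to the property itself. Had your route succeeded, it would only reduce Theorem~\ref{cubic} to Theorem~\ref{thm:Ham} (the traceable case), which is precisely the case the theorem is designed to go beyond; your block-decomposition step, demanding a Hamiltonian path of each block between prescribed bridge ends, is an even stronger false statement.

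The paper's proof avoids Theorem~\ref{thm:strongpar1} entirely. It takes a perfect matching $M$ (which exists by Petersen's theorem for cubic graphs whose cut-edges lie on a path), so $G-M$ is a $2$-factor with cycle components $H_1,\dots,H_k$; it adds $k-1$ matching edges to link these cycles into a connected spanning subgraph $H^+$; then, by induction on $k$ along the resulting tree-of-cycles structure, it finds $m=|Z|/2$ vertex-disjoint paths in $H^+$ whose endpoint set is exactly $Z=V(G)\setminus X$ and whose internal vertices all lie in $X$. Deleting the edges of these paths from $G$ leaves degree $2$ at every vertex of $Z$ and degree $1$ or $3$ at every vertex of $X$, which is directly an $X$-parity-factor. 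If you want to repair your attempt, the ingredient to pursue is this deletion of a suitable disjoint path system from the whole graph, not any traceability statement.
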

\begin{proof}
By Petersen's theorem\footnote{The most famous form of Petersen's theorem states that every 2-connected 3-regular graph contains a 1-factor. However, the result proved in the original paper is stronger; namely, if a 3-regular graph does not admit a 1-factor, then it has at least three end-blocks. It means that the cut-edges cannot be included in a single path.}
 \cite{Pet-1891} $G$ has a 1-factor $M$, hence removing the
 edges of $M$ we obtain a 2-factor; let the components of $G-M$ be
  $H_1,\dots,H_k$.
Here each $H_i$ is a cycle, whose length can be any positive integer including
 1 (loop) or 2 (two parallel edges) also.
Since $G$ is connected, one can select a subset $F\subseteq M$ of $k-1$ edges
 from the perfect matching such that $H^+ := E(H_1) \cup \dots \cup E(H_k) \cup F$
  is a connected spanning subgraph of $G$.

Instead of $X$ we consider $Z:=V(G)\setminus X$. Note that also $Z$ has an even
 number of vertices, say $|Z|=2m$, because $G$ is 3-regular, hence $|V(G)|$ is even.
We are going to prove that $H^+$ admits a selection of $m$ paths, which we shall
 denote by $P^1,\dots,P^m$, such that they are
 mutually vertex-disjoint, all have both of their endpoints in $Z$, and all their
 internal vertices are in $X$.

 We proceed by induction on $k$.
If $k=1$, then $H^+$ is a Hamiltonian cycle in $G$, which is split into $2m$
 subpaths by the vertices of $Z$.
Selecting every second path we obtain a collection of paths as required.

Assume now $k>1$. There exists a cycle in $H^+$, say $H_k$, which is incident with
 precisely one edge of $F$.
Let this edge be $vw$, where $v\in V(H_k)$ and $w\in V(H_j)$ for some $j\neq k$.
We also set $Z_k:=Z\cap V(H_k)$.

If $|Z_k|$ is even and positive, then $Z_k$ splits $H_k$ into an
 even number of subpaths.
In this case we can select every second subpath, as we did in the case of $k=1$,
 delete $V(H_k)$ and all its incident edges from $H^+$, and apply induction.
(For $|Z_k|=0$ we just delete $V(H_k)$ and the incident edges.)

Suppose that $|Z_k|$ is odd.
 We now choose a vertex $z\in Z_k$ which is closest to $v$ along the cycle $H_k$.
  (The case of $z=v$ is also possible.)
If $|Z_k|>1$, we consider the shortest subpath $P$ of $H_k$ which is disjoint from
 $\{z,v\}$ and contains all vertices of $Z_k\setminus\{z\}$.
This $P$ is split into an odd number of subpaths by $Z_k\setminus\{z\}$; we select
 the first, third, ..., last of them.
After that, we apply the induction hypothesis to the graph obtained by the removal
 of $H_k$, for the modified set $Z':=(Z\setminus Z_k)\cup \{w\}$.
Note that $Z'$ contains an even number of vertices, say $2m'$, and the modified
 graph has a similar tree structure with a 2-factor consisting of $k-1$ cycles.
Hence it contains a collection of $m'$ paths whose set of endpoints is
 identical to $Z'$.
One of those paths ends in $w$;
 we extend it until $z$ using the shortest $v$--$z$ path in $H_k$.
This procedure proves that the required collection $P^1,\dots,P^m$ of $m$ paths
 exists indeed.

To complete the proof of the theorem we consider the graph $H^*$ with vertex set
 $V(G)$ and edge set $E(G)\setminus \left( \bigcup_{i=i}^m E(P^i) \right)$.
If a vertex $u$ is the endpoint of some $P^i$, then it has degree 2 in $H^*$;
 if it is an internal vertex of some $P^i$, then it has degree 1 in $H^*$;
 and if it is outside of $\left( \bigcup_{i=i}^m V(P^i) \right)$, then it
 has degree 3 in $H^*$.
This fact verifies the validity of the theorem because a vertex is an endpoint
 of some $P^i$ if and only if it belongs to $Z$.
\end{proof}

\bigskip
{\bf Acknowledgments}

\smallskip
The first author acknowledges the financial support from the Slovenian Research Agency under the project N1-0108.
This work of the second author was supported by the Slovak Research and Development Agency under the Contract No.\ APVV-19-0153.
Research of the third author was supported in part by the National Research,
 Development and Innovation Office -- NKFIH under the grant SNN 129364.
The authors would like to thank J\'{u}lius Czap for his helpful comments.

\end{document}